\newcommand*{\B}[1]{\ifmmode\bm{#1}\else\textbf{#1}\fi}
\numberwithin{equation}{section}
\newcommand{\tr}{\mbox{tr}}
 \newcommand{\VV}{\mathbb{V}}
 \newcommand{\UU}{\mathbb{U}}
 \newcommand{\HH}{\mathbb{H}}
 \newcommand{\NN}{\mathbb{N}}
 \newcommand{\RR}{\mathbb{R}}
 \newcommand{\PP}{\mathbb{P}}
 \newcommand{\HS}{\rm {HS\ }}
 \newcommand{\Hess}{\mathbf {Hess\ }}
\newtheorem{theorem}{Theorem}[section]
\newtheorem{thm}[theorem]{Theorem}
\newtheorem{lemma}[theorem]{Lemma}
\newtheorem{proposition}[theorem]{Proposition}
\theoremstyle{definition}
\newtheorem{definition}[theorem]{Definition}
\theoremstyle{remark}
\newtheorem{remark}[theorem]{Remark}
\newtheorem{rem}[theorem]{Remark}
\definecolor{darkgreen}{rgb}{0,0.5,0}
\definecolor{purple}{rgb}{1,0,1}
\newcommand{\kibitz}[2]{\ifnum\Comments=1\textcolor{#1}{#2}\fi}
\begin{document}

\author{Shulan Hu$^1$, Ran Wang$^{2*}$}
 \address{$^1$  School of Statistics and Mathematics, Zhongnan University of Economics and Law, Wuhan, 430073, China.} \email{hu\_shulan@zuel.edu.cn}
\address{$^{2*}$  Corresponding author, School of Mathematics and Statistics, Wuhan University, Wuhan, 430072, China. }
 \email{rwang@whu.edu.cn}

   \title{Asymptotics  of stochastic Burgers equation   with jumps}

\date{}
\begin{abstract} For one-dimensional stochastic Burgers equation driven by Brownian motion and Poisson process, we study the  $\psi$-uniformly exponential ergodicity with $\psi(x)=1+\|x\|$, the moderate deviation principle  and the large deviation principle for the occupation measures.
\end{abstract}

   \subjclass[2010]{60H15, 60F10,  60J75}

   \keywords{Stochastic   Burgers equation;  Exponential ergodicity;  Large deviation principle;  Poisson processes.}

\maketitle

\section{Introduction}
 
As is well-known, Burgers equation was first studied to understand the turbulent fluid flow, see \cite{Burgers}.   Since then the Burgers equation perturbed by different random noises have been considered,  see monographs  \cite{DPZ14}, \cite{PeZa07} and recent articles  \cite{DX2007}, \cite{WuXie}, \cite{DXZ} and references therein.

The ergodicity of the  stochastic Burgers equation driven by Brownian motion and Poisson process  was proved  in \cite{Dong2008}  in the sense that the  system converges to a unique invariant measure
under the weak topology,  but the convergence speed is not addressed.
In this paper,
we prove that the system  converges to the invariant measure  exponentially faster under a
topology stronger than total variation by  constructing a Lyapunov function in the same  way as in   \cite{DXW}.    The
moderate deviation principle (MDP) for the occupation  measure   is also obtained.

 The large deviation principle (LDP) for the occupation  measure  is one of the strongest ergodicity results
for the long time behavior of Markov processes. It has been one of the classical research
topics in probability since the pioneering work of \cite{DV}. It gives an estimate on the probability that the occupation measures are deviated from the invariant measure,  refer to  \cite{DS} for  an introduction to large deviation theory of Markov processes.
\cite{Wu01} gave   the hyper-exponential recurrence criterion of the  LDP of occupation measures for  strong Feller and irreducible Markov processes.
 Based on this   criterion,   the  large deviations  of the occupation measures for the stochastic  Burgers equation  and stochastic Navier-Stokes equation   driven by Brownian motion are proved in \cite{Gou1} and \cite{Gou2}.
 There are some other papers about the applications of Wu's criterion, see \cite{JNPS2}, \cite{Ner} for some dissipative SPDEs.   \cite{WXX} proposed a  framework  for verifying the hyper-exponential recurrence condition,  which contains a   family of  strong dissipative SPDEs. In that framework, the strong dissipation produces to a  stronger-norm moment estimate for the system after a fixed time  uniformly over the initial values, which implies the  hyper-exponential recurrence condition.    See \cite{WX} for an application to  stochastic reaction-diffusion equation driven by the subordinate Brownian motion.

  However, the framework in \cite{WXX} is no longer available for  the stochastic Burgers equation, which does not have the strong dissipation.  In this paper, we check the hyper-exponential recurrence condition   by using an exponential martingale argument.  Due to the present of the jumps, the proof here is more complicated than that for the Brownian motion case in \cite{Gou1}.

The paper is organized as follows.  The framework is given in Section 2. Section 3 is devoted to proving    the  $\psi$-uniformly exponential ergodicity and the moderate deviation principle.   In Section 4, we   prove the large deviation principle.

\section{The framework}

  Let $\HH:=L^2(0,1)$ with the Dirichlet boundary condition and with vanishing mean values.
Then $\HH$ is a real separable Hilbert space with inner product
$$\langle  x,y \rangle:=\int_0^1x(\xi)y(\xi) d\xi,\ \ \ \ \ \forall \ x, y \in \HH.$$
Denote $\|x\|_{\HH}:= \left(\langle x,x\rangle_{\HH}\right)^{\frac12}.$
Let $\Delta x=x''$ be the second order differential operator on $\HH$. Then $-\Delta$ is a positive  self-adjoint operator on $\HH$. Let  $\alpha_{k}=\pi^2k^2$  and  $e_{k}(\xi):= \sqrt{2}\sin(k\pi \xi)$, for any $k\in \mathbb N^*=\{1,2,\cdots\}$.
Then $\{e_k\}_{k\in\NN^*}$ forms an orthogonal basis of $\HH$ and  $-\Delta e_k=\alpha_k e_k$ for any $k\in\NN^*$.

  Let $ \VV$ be the domain of the fractional operator $(-\Delta)^{\frac{1}{2}}$, i.e.,
 $$
 \VV:=\left\{\sum_{k\in \NN^*}\alpha_k^{\frac{1}{2}}a_k e_k; (a_k)_{k\in \NN}\subset\mathbb R \text{ with } \sum_{k\in \NN^*}a_k^2<+\infty \right\},
  $$
 with the inner product
 $$
 \langle x,y\rangle_{\VV}:=\sum_{k\in \NN^*} \alpha_k\langle x, e_k\rangle_{\HH}\cdot\langle y, e_k\rangle_{\HH},
 $$
 and with the norm
$\|x\|_{\VV}:=\langle x,x\rangle_{\VV}^{\frac12}.
 $
Clearly, $\VV$ is densely and compactly embedded in $\HH$.

Let $(\Omega, \mathcal F, \{\mathcal F_t\}_{t\ge0}, \mathbb P)$ be a completed filtered probability space, and 
$N(dt,du)$  the Poisson measure with finite intensity measure $n(du)$ on a given
measurable space $(\mathbb U, \mathcal B(\mathbb U))$. Then
 $$ \widetilde N(dt,du):= N(dt,du)-n(du)dt$$
is the compensated  martingale measure. Let $W$ be the  cylindrical Wiener process, which is independent with $N(dt,du)$, e.g.,
  $
  W:=\sum_{k\in \NN^*}W^k e_k, 
  $
  where $\{W^k\}_{k\in\NN^*}$ are a sequence of independent standard one-dimensional Brownian motions   independent with $N(dt,du)$.  

Consider the following stochastic Burgers equation in the Hilbert space $\HH$:
\begin{eqnarray}\label{eq:Burgers}
\left\{
 \begin{array}{lll}
 & dX_t
 =  \Delta X_tdt+B(X_t)dt+ QdW_t+\int_{\mathbb U} f(X_{t-},u)\widetilde{N}(dt,du), \\
    &X(0)=x\in \HH
 \end{array}
\right.
\end{eqnarray}
Here $B(x):=B(x,x)$ is a bilinear operator, which is defined by  $B(x,y):=xy'$ for $x\in \HH, y\in \VV$, and $Q\in \mathcal L(\HH)$ (the   space of all Hilbert-Schmidt operators from $\HH$ to $\HH$) is given by
$$
Qx=\sum_{k\in \NN^*}\beta_k \langle x, e_k\rangle e_k, \ \ x\in \HH,
$$
with $\|Q\|_{\HS}:=\sqrt{\tr(Q^*Q)}=\sqrt{\sum_{k\in \NN^*} |\beta_k|^2}<\infty$.

Assume that the coefficient $f$ satisfies the following conditions:
\begin{itemize}
  \item[(H.1)] $f(\cdot,\cdot):\HH\times \UU\rightarrow \HH$ is measurable;
  \item[(H.2)] $\int_{\UU}\|f(0, u)\|_{\HH}^2 n(du)<\infty$;
  \item[(H.3)] $\int_{\UU}\|f(x, u)-f(y,u)\|_{\HH}^2 n(du)\le K\|x-y\|_{\HH}^2, \ \forall x,y\in \HH$;
  \item[(H.4)] $f(\cdot, u)\in C_b^1(\HH), \forall u\in \UU$.
\end{itemize}

  Let $\mathbb D([0,+\infty); \HH)$ be the space of all c\`adl\`ag functions from $[0,+\infty)$ to $\HH$ equipped with the  Skorokhod topology.
Denote by $S(t)=e^{\Delta t}$.    
\begin{definition}  The process $X=\{X_t\}_{t\ge0}$ is called a mild solution of \eqref{eq:Burgers}, if for any $x\in \HH$,  $X\in\mathbb D([0,+\infty); \HH)$ satisfying that  for  any  $t>0$,
$$
\int_0^t\left[\|X(s)\|_{\HH}^2+\|B(X(s))\|_{\HH}^2\right]ds<\infty, 
$$
and
\begin{align*}
X_t=&S(t)x+\int_0^t S(t-s)B(X_s)ds+\int_0^t S(t-s)Q dW_s\\
&+\int_0^t\int_{\mathbb U}S(t-s)f(X_{s-},u)\widetilde N(ds,du), \ \ \mathbb P-a.s.
\end{align*}
\end{definition}

 For all $\varphi\in \mathcal B_b(\HH)$ (the space of all bounded measurable functions on $\HH$), define
$$
P_t\varphi(x):=\mathbb E_x[\varphi(X_t)]  \ \ \ \text{for all } t\ge0, x\in \HH,
$$
where $\mathbb E_x$ denotes the expectation with respect to (w.r.t. for short) the law of stochastic process $X$ with initial value $X_0=x$.
For any $t>0$, $P_t$ is said to be {\it strong Feller} if $P_t\varphi\in C_b(\HH)$ for any $\varphi\in \mathcal B_b(\HH)$, where  $C_b(\HH)$ is the space of all bounded continuous functions on $\HH$.
 $P_t$ is {\it irreducible} in $\HH$ if $P_t1_O(x)>0$ for any $x\in \HH$ and any non-empty open subset $O$ of $\HH$.

Recall the following properties about the solution to Eq.\eqref{eq:Burgers}.
\begin{thm}[\cite{DX2007}, \cite{Dong2008}] \label{Thm solu} Under (H.1)-(H.4), the following statements hold:
\begin{enumerate}
\item[(i)]   For every $x \in \HH$ and $\omega \in \Omega$ a.s.,
Eq.\eqref{eq:Burgers} admits a unique mild solution $X=\{X_t\}_{t\ge0}\in \mathbb D([0,\infty);\HH) \cap L^2((0,\infty);\VV)$, which is a Markov process.
\item[(ii)]  $X=\{X_t\}_{t\ge0}$ is   strong Feller and  irreducible in $\HH$, and it admits a unique invariant probability measure $\mu$.
     \end{enumerate}
\end{thm}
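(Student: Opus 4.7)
The plan is to split the argument cleanly into existence/uniqueness and the ergodic-theoretic properties, borrowing from the Brownian-driven case in \cite{DX2007} and adding the compensated Poisson integral from \cite{Dong2008}.

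For part (i), I would run a Galerkin/truncation scheme. First replace $B(x)=xx'$ by a cut-off $B_N(x):=B(x)\chi(\|x\|_{\HH}/N)$; the truncated drift is Lipschitz on bounded sets in $\HH$, and together with assumptions (H.2)--(H.3) for the jump coefficient, a Banach fixed-point argument in the space $\mathbb D([0,T];\HH)$ with suitable weighted norm gives a unique mild solution $X^N$ for each $N$. To remove the cut-off, I would apply It\^o's formula to $\|X^N_t\|_{\HH}^2$ and use the key identity $\langle B(x),x\rangle_{\HH}=\int_0^1 x^2 x'\,d\xi =\tfrac13\int_0^1 (x^3)'d\xi=0$ (Dirichlet boundary, vanishing mean). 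The resulting energy equation is
\begin{equation*}
\|X^N_t\|_{\HH}^2+2\int_0^t\|X^N_s\|_{\VV}^2 ds = \|x\|_{\HH}^2+t\|Q\|_{\HS}^2 + \int_0^t\!\!\int_{\UU}\|f(X^N_{s-},u)\|_{\HH}^2 N(ds,du)+M_t,
\end{equation*}
where $M_t$ is a local martingale. Using (H.2)--(H.3), Gronwall and a localization argument yield $\mathbb E\sup_{s\le T}\|X^N_s\|_{\HH}^2+\mathbb E\int_0^T\|X^N_s\|_{\VV}^2 ds\le C(T,x)$, uniformly in $N$. This prevents the cut-off from firing and produces a unique global mild solution $X\in\mathbb D([0,\infty);\HH)\cap L^2_{\text{loc}}((0,\infty);\VV)$. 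Uniqueness at the full equation is obtained by writing $Z=X-Y$ for two solutions and using the estimate $|\langle B(X)-B(Y),Z\rangle_{\HH}|\le \tfrac12\|Z\|_{\VV}^2+C(1+\|X\|_{\VV}^2)\|Z\|_{\HH}^2$ (a standard interpolation using $\|Z\|_{L^4}\le \|Z\|_{\HH}^{1/2}\|Z\|_{\VV}^{1/2}$); combined with the jump Lipschitz condition (H.3), a stochastic Gronwall inequality gives $Z\equiv 0$. The Markov property then follows from pathwise uniqueness and the strong Markov nature of the driving noises.

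For part (ii), I would establish the strong Feller property via a Bismut--Elworthy--Li type formula applied to the Brownian component. Assuming non-degeneracy of $Q$ (which is implicit in the claim), I would perturb the initial condition in a Galerkin finite-dimensional projection and use Malliavin integration by parts against $W$, treating the jump noise as conditioning data, to get a gradient bound $\|\nabla P_t\varphi(x)\|_{\HH}\le C(t,x)\|\varphi\|_\infty$ for bounded measurable $\varphi$ and $t>0$. Irreducibility follows by the usual controllability argument: the deterministic controlled PDE with control $Qh$ and no jumps can drive any $x\in\HH$ arbitrarily close in $\HH$-norm to any target $y$ in finite time (by parabolic smoothing and the dissipation $-\Delta$), and the law of the stochastic solution is absolutely continuous with respect to the controlled path on the event that no jumps occur up to time $t$, an event of positive probability under $n(\UU)<\infty$; combined with Girsanov for the Wiener part this gives $P_t\mathbf 1_O(x)>0$.

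Finally, existence of an invariant probability measure $\mu$ follows from Krylov--Bogoliubov: the energy estimate, after taking expectations, yields $\sup_{t\ge 0}\tfrac1t\int_0^t \mathbb E\|X_s\|_{\VV}^2 ds <\infty$, and the compact embedding $\VV\hookrightarrow\HH$ makes the Ces\`aro averages of the laws of $X_t$ tight on $\HH$. Any weak limit is invariant by the Feller property. Uniqueness of $\mu$ is then obtained from Doob's theorem in the form suitable for jump--diffusion Markov semigroups: strong Feller plus irreducibility imply that any two invariant measures are mutually equivalent and hence equal. The main obstacle throughout is the quadratic nonlinearity $B$, which is not globally Lipschitz and forces the energy identity $\langle B(x),x\rangle=0$ to carry the whole burden of global well-posedness; verifying that this identity survives both the Galerkin truncation and the jump perturbation (so that the stochastic energy equality closes) is the subtle point, but is standard once the Dirichlet/zero-mean structure of $\HH$ is exploited.
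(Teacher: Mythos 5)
This theorem is not proved in the paper at all: it is imported verbatim from \cite{DX2007} and \cite{Dong2008}, so there is no in-paper argument to compare your sketch against. Judged on its own terms, your outline for part (i) — truncation of $B$, fixed point for the cut-off equation, the energy identity $\langle B(x),x\rangle_{\HH}=0$ from the Dirichlet/zero-mean structure, a uniform a priori bound to remove the cut-off, and uniqueness via the interpolation estimate for the trilinear term plus (H.3) and a stochastic Gronwall argument — is exactly the standard route taken in the cited references, and the energy equation you write down correctly accounts for the jump contribution $\int_0^t\int_{\UU}\|f\|_{\HH}^2\,N(ds,du)$.

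The weak point is part (ii), specifically strong Feller. Two issues. First, as you yourself note, strong Feller cannot hold under (H.1)--(H.4) alone: if $Q$ is degenerate (say $Q=0$, $f=0$) the semigroup is deterministic and not strong Feller, so a non-degeneracy hypothesis on $Q$ (all $\beta_k\neq 0$, with suitable decay) is genuinely required and is indeed imposed in \cite{DX2007}; flagging it as ``implicit'' is correct but it is a real gap in the theorem as restated here. Second, your plan to apply a Bismut--Elworthy--Li formula ``treating the jump noise as conditioning data'' does not go through as stated, because the jump coefficient $f(X_{t-},u)$ depends on the state: conditioning on the jump times and marks does not decouple the jump contribution from the perturbation of the initial condition, so the derivative flow still sees the jumps and the gradient bound cannot be obtained by freezing them. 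The references instead establish strong Feller through gradient estimates for the Galerkin/truncated systems with the full (Wiener plus compensated Poisson) noise and a limiting argument. The remaining ingredients of your sketch — irreducibility via controllability on the positive-probability event of no jumps (using $n(\UU)<\infty$) together with Girsanov for the Wiener part, Krylov--Bogoliubov with the compact embedding $\VV\hookrightarrow\HH$ for existence of $\mu$, and Doob's theorem for uniqueness — are all sound and match the standard arguments.
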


Define the occupation measure $\mathcal L_t$   by
\begin{equation}\label{e:occupation}
\mathcal{L}_t(\Gamma):=\frac1t\int_0^t\delta_{X_s}(\Gamma)d s,
\end{equation}
where $\Gamma$ is a Borel measurable set in $\HH$, $\delta_{\cdot}$ is the Dirac measure. Then $\mathcal L_t$ is in  $\mathcal M_1(\HH)$, the space of  probability measures on $\HH$. On  $\mathcal M_1(\HH)$, let  $\sigma(\mathcal M_1(\HH), \mathcal B_b (\HH))$ be the $\tau$-topology  of convergence against measurable and bounded functions,  which is much stronger than the usual weak convergence topology $\sigma(\mathcal M_1(\HH), C_b(\HH))$.

\section{$\psi$-uniformly exponential ergodicity and moderate deviation principle}

Let $\mathcal M_b(\HH)$ be the space of signed $\sigma$-additive measures of bounded variation on $\HH$ equipped with the Borel $\sigma$-field $\mathcal B (\HH)$.
On  $\mathcal M_b(\HH)$, we  consider the $\tau$-topology  $\sigma(\mathcal M_b(\HH), \mathcal B_b(\HH))$.

 Given a measurable function $\psi:\HH\rightarrow\mathbb R_+$, define
 $$
 \mathcal B_{\psi}:=\{g: \HH\rightarrow \mathbb R; |g(x)|\le \psi(x)\}.
  $$
For a  function  $b(t):\mathbb R_+\rightarrow (0,+\infty)$, 
define
\begin{equation}\label{eq: M measure}\mathfrak{M}_t:=\frac{1}{b(t)\sqrt t}\int_0^t(\delta_{X_s}-\mu)d s.\end{equation}
where $b(t)$  satisfies
\begin{equation}\label{eq: scale}
\lim_{t\rightarrow \infty}b(t)=+\infty, \ \ \ \lim_{t\rightarrow \infty}\frac{b(t)}{\sqrt t}=0.
\end{equation}
Let $\PP_{\nu}$ be the probability measure of the system $X$ with initial measure $\nu$.

 \begin{itemize}
  \item[(H.5)]
 Assume that  there exists a constant $M>0$ satisfying that
 \begin{equation}
M:=\sup_{x\in\HH}\int_{\UU}\|f(x, u)\|_{\HH}^2  n(du)<+\infty.
\end{equation}
 \end{itemize}

\begin{thm}\label{thm main 1}
 Assume (H.1)-(H.5) hold. Then the following statements hold for $\psi(x)=1+\|x\|_{\HH}$.
  \begin{itemize}
    \item[(1)] The invariant measure $\mu$ satisfies that $\mu(\psi)<\infty$ and the Markov semigroup $\{P_t\}_{t\ge0}$ is $\psi$-uniformly exponentially ergodic, i.e.,  there exist some constants $C, \gamma>0$ satisfying that for
    $$
    \sup_{g\in \mathcal B_{\psi}}|P_tg(x)-\mu(g)|\le Ce^{-\gamma t}\psi(x), \ \ \ x\in \HH,\ t\ge0.
    $$
    \item[(2)]
  For any initial measure $\nu$ verifying $\nu(\psi)<+\infty$,   the measure $\PP_{\nu}(\mathfrak M_t\in\cdot)$ satisfies the large deviation principle w.r.t. the $\tau$-topology with speed $b^2(t)$ and the rate function
 \begin{equation}
 I(\nu):=\sup\left\{\int \varphi d \nu-\frac12\sigma^2(\varphi);\varphi\in \mathcal B_b(\HH) \right\}, \ \  \ \forall \nu\in \mathcal M_b(\HH),
 \end{equation}
where 
\begin{equation}
\sigma^2(\varphi):=\lim_{t\rightarrow \infty}\frac1t\mathbb E_{\mu}\left(\int_0^t (\varphi(X_s)-\mu(\varphi))d s \right)^2
\end{equation}
exists in $\RR$ for every $\varphi\in  B_{\psi}$.  More precisely, the following three properties hold:
\begin{itemize}
  \item[(a1)] for any $r\ge0$, $\{\beta\in \mathcal M_b(\HH); I(\beta)\le r \}$ is compact in  $(\mathcal M_b(\HH),\tau)$;
  \item[(a2)]$($the upper bound$)$ for any  closed set $\mathcal E$ in $(\mathcal M_b(\HH), \tau)$,
   $$
   \limsup_{t\rightarrow \infty}\frac1{b^2(t)}\log \mathbb P_{\beta}(\mathfrak M_t\in \mathcal E)\le -\inf_{\beta\in \mathcal E} I(\beta);
   $$
    \item[(a3)] $($the lower bound$)$ for any open set $\mathcal D$ in $(\mathcal M_b(\HH), \tau)$,
   $$
   \liminf_{t\rightarrow \infty}\frac1{b^2(t)}\log \mathbb P_{\beta}(\mathfrak M_t\in \mathcal D)\ge -\inf_{\beta\in \mathcal D} I(\beta).
   $$
\end{itemize}
  \end{itemize}

  \end{thm}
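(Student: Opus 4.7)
The plan is to treat the two parts separately, with part (1) supplying the Lyapunov drift that feeds part (2). For (1) I would apply the It\^{o} formula for jump-diffusions to $\|X_t\|_{\HH}^2$, using four ingredients: the Poincar\'{e} inequality $\langle -\Delta X, X\rangle = \|X\|_{\VV}^2 \ge \alpha_1 \|X\|_{\HH}^2$; the crucial Burgers cancellation
$\langle B(X), X\rangle = \int_0^1 X X' X\, d\xi = \tfrac{1}{3}\int_0^1 (X^3)'\,d\xi = 0,$
which uses the vanishing boundary values; the quadratic variation of the Wiener part, contributing $\|Q\|_{\HS}^2\, t$; and the compensator of the Poisson part, which, after the algebraic identity $\|y+f\|_{\HH}^2 - \|y\|_{\HH}^2 - 2\langle y,f\rangle = \|f\|_{\HH}^2$, reduces to $\int_0^t\!\int_{\UU}\|f(X_s,u)\|_{\HH}^2\,n(du)\,ds \le Mt$ by (H.5). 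Taking expectations and applying Gr\"{o}nwall would give
\begin{equation*}
\mathbb{E}_x \|X_t\|_{\HH}^2 \le e^{-2\alpha_1 t}\|x\|_{\HH}^2 + \frac{\|Q\|_{\HS}^2 + M}{2\alpha_1},
\end{equation*}
and hence an estimate of the form $\mathbb{E}_x \psi(X_t) \le e^{-\gamma t}\psi(x) + C$.

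This Lyapunov drift is the only missing ingredient for (1): combined with the strong Feller property and irreducibility of Theorem \ref{Thm solu}, together with the compactness of bounded sets in $\HH$ after one application of the semigroup (through the smoothing of $S(t)$), every bounded set is petite. The Meyn--Tweedie machinery then yields $\psi$-uniformly exponential ergodicity, with $\mu(\psi)<\infty$ obtained by letting $t\to\infty$ in the drift inequality. This is essentially the route followed in \cite{DXW}.

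For the MDP in part (2) I would follow the abstract framework used in \cite{Gou1} for the pure Brownian case, which produces a MDP in the $\tau$-topology from $\psi$-uniform exponential ergodicity via a Poisson-equation decomposition. The core step is to verify, for each $\varphi \in \mathcal{B}_b(\HH)$,
\begin{equation*}
\lim_{t\to\infty}\frac{1}{b^2(t)}\log \mathbb{E}_{\nu}\exp\!\left(\frac{b(t)}{\sqrt{t}}\int_0^t \bigl(\varphi(X_s) - \mu(\varphi)\bigr)\, ds\right) = \frac{1}{2}\sigma^2(\varphi),
\end{equation*}
which, since $b(t)/\sqrt{t}\to 0$, is a moderate-scale Gärtner--Ellis computation. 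Concretely, I would solve the Poisson equation $(I - P_{s_0})h = \varphi - \mu(\varphi)$ for some fixed $s_0 > 0$ using the spectral gap from (1), and then use the martingale decomposition $\int_0^t(\varphi(X_s)-\mu(\varphi))\,ds = h(X_0) - h(X_t) + M_t$. The moderate-scale exponential control of $M_t$ (through its exponential martingale and a second-order Taylor expansion) transfers to the occupation measures, and the quadratic-variation limit gives the formula for $\sigma^2(\varphi)$. The upgrade from $\mathcal{B}_b$-cylinder events to the full $\tau$-topology is then the Dawson--Gärtner projective limit together with exponential tightness driven by the $\psi$-drift of (1), and the compactness of sublevel sets of $I$ follows from its Legendre-transform structure.

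The main obstacle, and what truly separates the proof from the purely Brownian case, is controlling the exponential moments of the jump part. The key lemma is an estimate
\begin{equation*}
\mathbb{E}\exp\!\left(\lambda\int_0^t\!\!\int_{\UU} g(X_{s-},u)\,\widetilde{N}(ds,du)\right) \le \exp\!\left(\int_0^t\!\!\int_{\UU}\bigl(e^{\lambda g}-1-\lambda g\bigr)\,n(du)\,ds\right),
\end{equation*}
uniform in $t$, followed by a bound on the compensator in the exponent. Here (H.5) and the $C_b^1$ regularity in (H.4) are what keep $e^{\lambda g}-1-\lambda g$ controllable (after the boundedness of $h$ from the Poisson equation is established), making the exponential-martingale argument viable at the moderate scale. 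This is the step where the present proof must diverge from \cite{Gou1}, in agreement with the authors' remark in the introduction.
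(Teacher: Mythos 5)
Your Lyapunov computation for part (1) assembles the right ingredients (the Burgers cancellation $\langle B(x),x\rangle=0$, the Poincar\'e inequality, the identity $\|y+f\|_{\HH}^2-\|y\|_{\HH}^2-2\langle y,f\rangle=\|f\|_{\HH}^2$ combined with (H.5)), but by applying Poincar\'e immediately and running Gr\"onwall on $\mathbb E_x\|X_t\|_{\HH}^2$ you throw away the $\VV$-norm dissipation, and the resulting drift only pushes the process back into bounded $\HH$-balls, which are \emph{not} compact. You then have to assert that every bounded set is petite ``through the smoothing of $S(t)$''; that is precisely the nontrivial step the paper's construction is designed to avoid, and you do not prove it for the nonlinear flow. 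The paper works at the level of the generator with $\psi(x)=(1+\|x\|_{\HH}^2)^{1/2}$, keeps the full dissipation, and obtains $\mathfrak L\psi(x)\le -(1+\|x\|_{\VV}^2)^{1/2}+c_1$; the sublevel set $\mathcal K=\{x:\|x\|_{\VV}\le 2c_1\}$ is then compact in $\HH$ by the compact embedding $\VV\hookrightarrow\HH$, compact sets are petite for a strong Feller irreducible process, and one gets $-\mathfrak L\psi/\psi\ge\tfrac12 1_{\mathcal K^c}-c_1 1_{\mathcal K}$. This single inequality is the entire content of the paper's proof: part (1) follows by citing \cite{DWT} and part (2) by citing \cite{Wu01}. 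Your route for (1) is repairable (retain $-\|x\|_{\VV}^2$ instead of $-\alpha_1\|x\|_{\HH}^2$ before taking expectations), but as written it has a gap.

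The more serious problem is part (2). The paper performs no G\"artner--Ellis computation: the MDP is an immediate consequence of the same Lyapunov/compact-set condition via Wu's Theorem 2.4, with no exponential-moment requirement on the noise. Your proposed route through the Poisson equation $(I-P_{s_0})h=\varphi-\mu(\varphi)$ and an exponential-martingale bound on the jump part requires $\int_{\UU}\bigl(e^{\lambda g}-1-\lambda g\bigr)n(du)<\infty$ for $g=h(x+f(x,u))-h(x)$; since $h$ is only bounded by $C\psi$ with $\psi(x)=1+\|x\|_{\HH}$ (the ergodicity is $\psi$-uniform, not uniform), this forces exponential integrability of $\|f(x,u)\|_{\HH}$ under $n(du)$. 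That is exactly hypothesis (H.6), which is \emph{not} assumed in this theorem --- only the square-integrability (H.5) is --- and is reserved for the genuine LDP of Section 4. Moreover, (H.4) is $C_b^1$ regularity in $x$ and gives no control on jump sizes in $u$, so it cannot play the role you assign to it in taming $e^{\lambda g}-1-\lambda g$. As stated, your argument for (2) cannot be closed under the theorem's hypotheses; the intended proof is simply to verify the drift inequality toward a compact set and invoke the cited criteria.
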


Recall that a measurable function $h:\HH\rightarrow \mathbb R$ belongs to the extended domain $\mathbf{D}_e(\mathfrak L)$ of the generator $\mathfrak L$ of $\{P_t\}_{t\ge0}$,
if there is a measurable function $g:\HH\rightarrow\RR$ satisfying  that  for all $t>0$, $\int_0^t|g|(X_s)ds<+\infty, \PP_x$-a.s., and
\begin{equation}
h(X_t)-h(X_0)-\int_0^t g(X_s)ds,
\end{equation}
is a c\`{a}dl\`{a}g $\PP_x$-local martingale for all $x\in \HH$. In that case, we write
$
g:= \mathfrak{L} h.
$

\begin{proof}[Proof of Theorem \ref{thm main 1}]  According to \cite[Theorem 5.2c]{DWT} and \cite[Theorem 2.4]{Wu01},  it is sufficient to prove that there  exist some continuous function $1\le \psi\in \mathbf{D}_e(\mathfrak L)$,  compact subset $\mathcal K\subset \HH$ and constants $\varepsilon, C>0$ such that
\begin{align}\label{Lyapunov}
 -\frac{\mathfrak L \psi}{\psi}\ge\varepsilon \mathrm 1_{\mathcal K^c}-C\mathrm 1_{\mathcal K}.
\end{align}
Here, we construct the Lyapunov   function $\psi$ in the same way as in \cite{DXW}. Since $1+\|x\|_{\HH}$ is comparable with $(1+\|x\|_{\HH}^2)^{\frac12}$, we will take 
\begin{equation}\label{eq: psi}\psi(x)=(1+\|x\|_{\HH}^2)^{\frac12}
\end{equation}
 instead of $1+\|x\|_{\HH}$.
First observe that
\begin{equation}\label{eq: nabla psi}
\nabla \psi(x)= \frac{x}{({1+\|x\|_{\HH}^2})^{\frac12}},
\end{equation}
and
\begin{equation}\label{eq: hess psi}
\Hess \psi(x)=-\frac{ x\times x}{\left(1+\|x\|_{\HH}^2\right)^{\frac32}}+ \frac{I_H}{(1+ \|x\|_{\HH}^2)^{\frac12}},
\end{equation}
here  $I_H$ stands for the identity operator. Then, we have
\begin{equation}\label{eq: nabla hess psi}
\sup_{x\in\HH}\|\Hess \psi(x)\|\le 1, \ \ \ \ \sup_{x\in\HH}\|\nabla \psi(x)\|\le 1,
\end{equation}
here $\|\Hess \psi(x)\|$ and $\|\nabla \psi(x)\|$ denote  their operator norms.   Moreover, we have
\begin{equation}
\langle \Delta x, \nabla \psi(x)\rangle=\frac{ \langle \Delta x, x\rangle}{(1+ \|x\|_{\HH}^2)^{\frac12}}=- \frac{ \|x\|_{\VV}^2}{(1+ \|x\|_{\HH}^2)^{\frac12}} , \  \ \forall x\in \mathbb V,
\end{equation}
and
\begin{equation}\label{eq: B psi}
\langle B(x), \nabla \psi(x)\rangle=\frac{ \langle B(x), x\rangle}{(1+ \|x\|_{\HH}^2)^{\frac12}}=0, \ \  \forall x\in \mathbb V.
\end{equation}

 By Taylor's expansion, for any $x\in \HH, u\in \UU$,  there exists  constant  $\theta\in(0,1)$ satisfying that
\begin{align}\label{eq:psi1}
&\psi\big(x+f(x,u)\big)-\psi(x)-\langle \nabla \psi(x), f(x,u)\rangle \notag \\
= & \frac12 \big\langle \Hess \psi\big(x+\theta f(x,u)\big)f(x,u), f(x,u) \big\rangle.
\end{align}

 By It\^o's formula, we have
\begin{align}\label{eq:psi2}
d\psi(X_t)
=&\langle \Delta X_t, \nabla \psi(X_t)\rangle dt+\langle B(X_t), \nabla\psi(X_t)\rangle dt\notag\\
&+\langle \nabla\psi(X_t), QdW_t\rangle+ \frac12\tr(Q^*{\Hess \psi}(X_t)Q)dt \notag \\
&+\int_{\UU}\big( \psi(X_{t-}+f(X_{t-},u))-\psi(X_{t-}) \big)\widetilde N(dt,du)\notag\\
&+\int_{\UU}\big( \psi(X_{t-}+f(X_{t-},u))-\psi(X_{t-})-\langle \nabla \psi(X_{t-}), f(X_{t-},u) \rangle  \big)n(du)dt.
 \end{align}
Then,  by (H.5), \eqref{eq: nabla psi}-\eqref{eq:psi1}, we know that
\begin{align}\label{eq:psi3}
\mathfrak L\psi(x)=&\langle  \Delta x, \nabla \psi(x)\rangle  +\langle  B(x), \nabla\psi(x)\rangle  +  \frac12\tr(Q^*{\Hess \psi}(x)Q) \notag \\
&+\int_{\UU}\left( \psi(x+f(x,u))-\psi(x)-\langle \nabla \psi(x), f(x,u) \rangle  \right)n(du) \notag\\
\le &-\frac{\|x\|_{\VV}^2}{(1+\|x\|_{\HH}^2)^{\frac12}}+\frac12 \|Q\|_{\HS}^2 +\frac12\int_{\UU} \|f(x,u)\|_{\HH}^2n(du)\notag\\
\le&-\frac{1+\|x\|_{\VV}^2}{(1+\|x\|_{\HH}^2)^{\frac12}}+\frac{1}{(1+\|x\|_{\HH}^2)^{\frac12}}+\frac12 \|Q\|_{\HS}^2+\frac{M}{2}\notag\\
\le &-(1+\|x\|_{\VV}^2)^{\frac12}+c_1,
 \end{align}
 where in the last inequality  the Poincar\'e inequality 
 $\|x\|_{\VV}\ge  \pi \|x\|_{\HH}$ is used,  $c_1:=1+\frac12\left( \|Q\|_{\HS}^2+M\right)$. 
 
 Let $\mathcal K:=\{x\in \HH; \|x\|_{\VV}\le 2c_1\}$. Then $\mathcal K$ is a compact set in $\HH$. For any $x\in \mathcal K$, we have 
  \begin{align}\label{eq:psi52} 
   \frac{(1+\|x\|_{\VV}^2)^{\frac12}-c_1}{(1+\|x\|_{\HH}^2)^{\frac12}} \ge -c_1;
 \end{align}
 for any $x\notin \mathcal K$, we have 
  \begin{align}\label{eq:psi53}  \frac{(1+\|x\|_{\VV}^2)^{\frac12}-c_1}{(1+\|x\|_{\HH}^2)^{\frac12}} \ge  
   \frac{(1+\|x\|_{\VV}^2)^{\frac12}-\frac{\|x\|_{\VV}}{2}}{(1+\|x\|_{\HH}^2)^{\frac12}} \ge \frac12.
   \end{align}
Putting \eqref{eq:psi3}-\eqref{eq:psi53} together,  we obtain that
 \begin{align}\label{eq:psi5} 
-\frac{\mathfrak L\psi(x)}{\psi(x)}\ge \frac{(1+\|x\|_{\VV}^2)^{\frac12}-c_1}{(1+\|x\|_{\HH}^2)^{\frac12}} \ge  &\frac{1}{2}1_{\mathcal K^c}-c_1 1_{\mathcal K},
 \end{align}
 which implies \eqref{Lyapunov}.
 The proof is complete.
\end{proof}

\section{Large deviation principle}

\begin{itemize}
  \item[(H.6)]
  Assume that   there exists a constant $a_0>0$ satisfying that
 \begin{equation}
\sup_{x\in \HH}\int_{\UU}\| f(x, u)\|_{\HH}^2\exp\left(a_0 \|  f(x, u)\|_{\HH} \right)n(du)<+\infty.
\end{equation}

\end{itemize}
For any $\lambda_0>0, L>0$, let
\begin{equation}\label{eq: M}\mathcal M_{\lambda_0,L}:=\left\{\nu\in\mathcal M_1(\HH); \int e^{\lambda_0\|x\|_{\HH}}\nu(dx)\le L\right\}.
\end{equation}

\begin{theorem}\label{thm main}  Assume   (H.1)-(H.4) and (H.6) hold.
Then the family $\PP_{\nu}(\mathcal L_t\in \cdot)$ as $t\rightarrow +\infty$ satisfies the LDP with respect to the $\tau$-topology, with  the speed $t$ and the rate function $J$, uniformly for any initial measure $\nu$ in $\mathcal M_{\lambda_0,L}$. 
More precisely, the following three properties hold:
\begin{itemize}
  \item[(a1)]  for any $a\ge0$, $\{\beta\in\mathcal M_1(\HH); J(\beta)\le a \}$ is compact in  $(\mathcal M_1(\HH),\tau)$;
  \item[(a2)](the upper bound) for any  closed set $\mathcal E$ in  $(\mathcal M_1(E),\tau)$,
   $$
   \limsup_{t\rightarrow \infty}\frac1t\log\sup_{\nu\in \mathcal M_{\lambda_0,L}}\mathbb P_{\nu}(\mathcal L_t\in \mathcal E)\le -\inf_{\beta\in \mathcal E} J(\beta);
   $$
  \item[(a3)](the lower bound) for any  open set $\mathcal D$ in $(\mathcal M_1(E),\tau)$,
   $$
   \liminf_{t\rightarrow \infty}\frac1t\log\inf_{\nu\in \mathcal M_{\lambda_0,L}}\mathbb P_{\nu}(\mathcal L_t\in \mathcal D)\ge -\inf_{\beta\in \mathcal D} J(\beta).
   $$
\end{itemize}
\end{theorem}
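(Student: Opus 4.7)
The plan is to invoke the hyper-exponential recurrence criterion of \cite{Wu01}. By Theorem \ref{Thm solu}(ii), $\{P_t\}_{t\ge 0}$ is strong Feller and irreducible with unique invariant measure $\mu$; what remains is to verify that for every $R>0$ there exists a compact set $K_R\subset\HH$ such that
$$
\sup_{\nu\in\mathcal M_{\lambda_0,L}}\mathbb E_{\nu}\bigl[\exp(R\tau_{K_R})\bigr]<\infty,
\qquad \tau_{K_R}:=\inf\{t\ge t_0\colon X_t\in K_R\},
$$
for some fixed $t_0>0$. Once this is in hand, Wu's theorem yields the uniform LDP on $\mathcal M_{\lambda_0,L}$ with a convex good rate function $J$ in the $\tau$-topology, proving all three items (a1)--(a3).

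To verify this recurrence I would follow the exponential martingale strategy hinted at in the introduction. Let $\phi(x):=(1+\|x\|_{\HH}^2)^{1/2}$ as in the proof of Theorem \ref{thm main 1} and set $V(x):=\exp(a\phi(x))$ with a parameter $a\in(0,a_0\wedge\lambda_0]$ to be tuned. Since $\|\nabla\phi\|_{\HH}\le 1$ and $\|\Hess\phi\|_{\rm op}\le 1/\phi$, Itô's formula together with the key identities $\langle B(x),x\rangle=0$ and $\langle\Delta x,x\rangle=-\|x\|_{\VV}^2$ used in \eqref{eq: nabla psi}--\eqref{eq: B psi} gives for the continuous part of $\mathfrak L V(x)$ the bound
$$
aV(x)\Bigl[-\frac{\|x\|_{\VV}^2}{\phi(x)}+\frac{\|Q\|_{\HS}^2}{2\phi(x)}+\frac{a}{2}\|Q\|_{\HS}^2\Bigr].
$$
For the compensated jump part, a Taylor expansion as in \eqref{eq:psi1} and the elementary bound $V(x+f)\le V(x)\exp(a\|f\|_{\HH})$, which follows from $\phi(x+f)\le\phi(x)+\|f\|_{\HH}$, produce the estimate
$$
\int_{\UU}\bigl[V(x+f(x,u))-V(x)-\langle\nabla V(x),f(x,u)\rangle\bigr]\,n(du)\le C_a V(x)\int_{\UU}\|f(x,u)\|_{\HH}^2 e^{a\|f(x,u)\|_{\HH}}\,n(du),
$$
whose right-hand side is uniformly bounded in $x$ by hypothesis (H.6) precisely because $a\le a_0$.

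Combining the two contributions and the Poincaré inequality $\|x\|_{\VV}\ge\pi\|x\|_{\HH}$, there is a constant $C$ independent of $x$ with
$$
\mathfrak L V(x)\le V(x)\Bigl[-a\,\frac{\|x\|_{\VV}^2}{\phi(x)}+C\Bigr].
$$
Since $\|x\|_{\VV}^2/\phi(x)\to\infty$ as $\|x\|_{\VV}\to\infty$, given any prescribed $R>0$ one can choose $R'=R'(R)$ large enough so that $\mathfrak L V(x)\le -R\,V(x)$ outside the set $K_{R'}:=\{x\in\HH\colon\|x\|_{\VV}\le R'\}$, which is compact in $\HH$ by the compact embedding $\VV\hookrightarrow\HH$. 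A standard application of Dynkin's formula to $e^{R(t\wedge\tau_{K_{R'}})}V(X_{t\wedge\tau_{K_{R'}}})$, after localising to make the stochastic integrals true martingales, then yields $\mathbb E_x[\exp(R\tau_{K_{R'}})]\le V(x)\le e^{a(1+\|x\|_{\HH})}$. Integrating against $\nu\in\mathcal M_{\lambda_0,L}$ and using $a\le\lambda_0$ gives $\sup_{\nu\in\mathcal M_{\lambda_0,L}}\mathbb E_{\nu}[\exp(R\tau_{K_{R'}})]\le e^a L<\infty$, as required.

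The main obstacle I anticipate is the control of the exponential jump compensator: unlike the purely Brownian case treated in \cite{Gou1}, the Taylor remainder for $V$ at a jump location produces an extra factor $e^{a\|f\|_{\HH}}$, which forces both the matching $a\le a_0$ dictated by (H.6) and a delicate coupling with $\lambda_0$ so that the estimate transfers to a uniform bound over $\mathcal M_{\lambda_0,L}$. A secondary technical point is the martingale localisation required to apply Dynkin's formula rigorously, which needs a uniform-on-bounded-sets moment estimate for $\sup_{s\le t}\|X_s\|_{\HH}$; this is standard given Theorem \ref{Thm solu} but must be carried out with some care in the jump setting.
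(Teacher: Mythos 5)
Your proposal is correct in substance, and it reaches the hyper-exponential recurrence condition \eqref{lem condition 2} by a route that differs from the paper's in its final step. Both arguments reduce the theorem to Wu's criterion and both rest on the same core drift computation: an exponential of the smoothed norm ($e^{a\phi}$ in your case, $e^{\psi_{\lambda}}$ with $\psi_{\lambda}(x)=(1+\lambda^2\|x\|_{\HH}^2)^{1/2}$ in the paper's Proposition \ref{Prop: exp}), the cancellation $\langle B(x),x\rangle=0$, the Poincar\'e inequality, and a second-order Taylor bound on the jump compensator producing the factor $e^{a\|f\|_{\HH}}$ that (H.6) is designed to absorb (your constraint $a\le a_0\wedge\lambda_0$ matches the paper's $\lambda\le a_0$ together with the restriction to $\mathcal M_{\lambda_0,L}$). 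Where you diverge is in converting this drift estimate into the hitting-time bound: the paper first derives a tail estimate for $Z_{\lambda}=\int_0^T\lambda^2\|X_t\|_{\VV}^2(1+\lambda^2\|X_t\|_{\HH}^2)^{-1/2}dt$ via the exponential local martingale of Lemma \ref{lem exp:mart}, obtains the exponential integrability \eqref{eq: exp V} of $\int_0^T\|X_t\|_{\VV}dt$, and then cites the Chebyshev argument of \cite[Lemma 6.1]{Gou1} on the inclusion $\{\tau_{\mathcal K}>T\}\subset\{\int_0^T\|X_t\|_{\VV}dt>RT\}$; you instead go straight to a Lyapunov drift condition $\mathfrak L V\le -RV$ off the $\VV$-ball $K_{R'}$ and apply the classical supermartingale/Dynkin argument to $e^{R(t\wedge\tau)}V(X_{t\wedge\tau})$. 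Your route is more self-contained and avoids Proposition \ref{Prop: exp} altogether, while the paper's yields the quantitative estimate \eqref{eq: exp V} as a reusable byproduct. Two points you should tighten: Wu's criterion requires \emph{both} bounds in \eqref{lem condition 2} for the same compact set (the $\tau_{\mathcal K}$ bound uniformly over $\nu\in\mathcal M_{\lambda_0,L}$ and the $\tau_{\mathcal K}^{(1)}$ bound with $t\ge1$ uniformly over $x\in\mathcal K$), so your single stopping time with threshold $t_0$ must be split and the second bound obtained via the Markov property at time $1$ together with $\sup_{x\in\mathcal K}\mathbb E_x[V(X_1)]\le e^{C}\sup_{x\in\mathcal K}V(x)<\infty$ (which follows from $\mathfrak L V\le CV$ and Gronwall); and the localisation needed to justify the optional stopping, which you correctly flag, is exactly where (H.6) enters to make the compensated jump integral a genuine local martingale.
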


\begin{rem} Assumptions   (H.1)-(H.4) are standard conditions for the existence and uniqueness of the solution for  Eq.\eqref{eq:Burgers}, see \cite{DX2007}. Condition (H.6) ((H.5) resp.) guarantees  for the exponential (square resp.) integrability of the solution.  The similar conditions are often used in the study of the large deviation theory for small Poisson noise perturbations of SPDEs, e.g., see \cite[Section 4]{RZ},  \cite[Condition 3.1]{BCD2013} and \cite[Condition 3.1]{YZZ}.  Inspiring by \cite[Section 4.1]{BCD2013},  we   give the following  example of the Poisson random measure $N$ and $f$ satisfying (H.1)-(H.4) and (H.6):

Let $\{N(t)\}_{t\ge0}$ be a Poisson process with the rate $1$, $\{A^j\}_{ j\in\mathbb N}$   independent  and identically distributed random variables, with a common distribution function $F$, which are also independent of $\{N(t)\}_{ t\ge0}$. Then
  $$
N([0,t]\otimes B)=\sum_{j=1}^{N(t)}1_{B} (A^j), \ \ \ \ t\ge0, B\in \mathcal B(\mathbb R_+),
$$ 
is a   Poisson random measure  on the space $\mathbb R_+ \times \mathbb R_+$. The intensity measure of $\{N(t)\}_{ t\ge0}$ is given by 
$$\nu(A\otimes B)=\rho(A)\cdot F(B), \  \  \ A, B\in\mathcal B(\mathbb R_+).$$
Here  $\rho(\cdot)$ denotes the Lebesgue measure.

Assume that   there exists $a_0>0$ such that
$$
\int_0^{\infty} u^2 e^{a_0 u} F(du)<\infty. 
$$
For any function  $G\in C_b^1(\HH)$,  the function 
$$f(x,u):= G(x)u, \ \  \ \  \ x\in \HH, u\in \mathbb R_+$$  
   satisfies all the  conditions required in Theorem \ref{thm main}.

\end{rem}

\begin{remark} The   rate function $J$ can be expressed by the  entropy of Donsker-Varadhan, see \cite{DV}, \cite[Chapter V]{DS} or \cite[Section 2.2]{Wu01}.  Under  the Feller assumption:
 $$
 P_t(C_b(\HH))\subset C_b(\HH),  \ \ \ \forall t\ge0,
 $$
 we know  that (for instance  see Lemma B.7 in \cite{Wu00})
 \begin{align}\label{eq rate Wu00}
 J(\nu)=\sup\left\{-\int\frac{\mathfrak L \varphi}{\varphi}d  \nu; 1\le \varphi\in \mathbf D_e(\mathfrak L) \right\},  \ \ \nu\in \mathcal M_1(\HH).
 \end{align} 
\end{remark}

\begin{rem} For every $\varphi:\HH\rightarrow \RR$ measurable and bounded, as $\nu\mapsto\int_{\HH}\varphi d \nu$ is continuous w.r.t. the $\tau$-topology, then by the contraction principle (\cite{DS}), $$\mathbb P_{\nu}\left(\frac{1}{t}\int_0^t \varphi(X_s)ds\in \cdot\right)$$
satisfies the LDP on $\RR$ uniformly over $\nu$ in  $\mathcal M_{\lambda_0,L}$,  with the rate function given by
$$
J^{\varphi}(r):=\inf\left\{J(\beta)<+\infty; \beta\in\mathcal M_1(\HH)\  \text{and} \int \varphi d\beta=r \right\},\ \ \forall r\in\RR.
$$
\end{rem}

\begin{proof}[The proof of Theorem \ref{thm main}]
By Theorem \ref{Thm solu}, we know that $P_t$ is strong Feller and irreducible in $\HH$ for any $t>0$. According to   \cite[Theorem 2.1]{Wu01}, to prove Theorem  \ref{thm main},   it is sufficient to prove that for any $\lambda>0$, there exists a compact set  $\mathcal K$  in $\HH$
 \begin{equation}\label{lem condition 2} \ \
  \sup_{\nu\in\mathcal M_{\lambda_0, L}}\mathbb E_{\nu}\left[e^{\lambda\tau_{\mathcal K}}\right]<\infty, \ \  \ \text{and}  \ \ \ 
\sup_{x\in \mathcal K}\mathbb E_x\left[e^{\lambda\tau_{\mathcal K}^{(1)}}\right]<\infty, 
\end{equation}
where
\begin{equation}\label{stopping time}
\tau_{\mathcal K}:=\inf\{t\ge0; \   X_t\in \mathcal K\},\ \ \ \tau_{\mathcal K}^{(1)}:=\inf\{t\ge1;\  \ X_t\in \mathcal K\}.
\end{equation} 

 The basic ingredient for the proof of \eqref{lem condition 2} is to show the exponential decay of the tails of the stopping times $\tau_{\mathcal K}$ and  $\tau_{\mathcal K}^{(1)}$ for a suitable choice of compact set $\mathcal K\subset \HH$. It  can be proved by using   arguments in  \cite[Lemma 6.1]{Gou1} or \cite[Lemma 3.8]{WXX}, combining with the critical  exponential estimate in  Proposition \ref{Prop: exp} below.

The proof is complete.
\end{proof}

The following result is similar to Lemma 4.1 in \cite{RZ}.
\begin{lemma}\label{lem exp:mart} For any $g\in C_b^2(\HH)$,
$$
M_t^g:=\exp\left(g(X_t)-g(X_0)-\int_0^t h(X_s)ds \right)
$$
is an $\mathcal F_t$-local martingale, where
\begin{align}
h(x)=&\langle \Delta x, \nabla g(x)\rangle+\langle  B(x), \nabla g(x)\rangle+\frac12\left\|Q^{*}\nabla g(x)\right\|_{\HH}^2+\frac12\tr(Q^*{\Hess g}(x) Q)\\ \notag
&+\int_{\UU}\Big(\exp\big[g(x+f(x,u))-g(x)\big]-1-\big\langle \nabla g(x), f(x,u) \big \rangle  \Big)n(du).
\end{align}
\end{lemma}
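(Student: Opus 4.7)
The plan is a standard two-step application of It\^o's formula. First I would apply It\^o's formula to $g(X_t)$ using the stochastic Burgers equation \eqref{eq:Burgers}, exactly in the style of \eqref{eq:psi2} in the proof of Theorem \ref{thm main 1} but with $\psi$ replaced by the test function $g\in C_b^2(\HH)$. This yields
\begin{align*}
g(X_t)-g(X_0)
=&\int_0^t\Big[\langle \Delta X_s, \nabla g(X_s)\rangle+\langle B(X_s),\nabla g(X_s)\rangle +\tfrac12\tr(Q^*\Hess g(X_s)Q)\Big]ds\\
&+\int_0^t\langle \nabla g(X_s),QdW_s\rangle +\int_0^t\!\!\int_{\UU}\Phi(X_{s-},u)\widetilde N(ds,du)\\
&+\int_0^t\!\!\int_{\UU}\big[\Phi(X_{s-},u)-\langle \nabla g(X_{s-}),f(X_{s-},u)\rangle\big]n(du)ds,
\end{align*}
where I set $\Phi(x,u):=g(x+f(x,u))-g(x)$. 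The last $n(du)ds$ term comes from compensating the jump part together with the Taylor correction \eqref{eq:psi1}, and the four previous terms form a continuous martingale (the $dW$ integral) plus a purely discontinuous martingale (the $\widetilde N$ integral) plus a finite-variation drift.

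Next, let $Y_t:=g(X_t)-g(X_0)-\int_0^t h(X_s)ds$ and apply It\^o's formula for general semimartingales with jumps to the smooth function $\phi(y)=e^y$. Writing $dY_s=dA_s+dM^c_s+dM^d_s$ (drift, continuous martingale, jump martingale), the quadratic variation of the continuous martingale part is $d[Y^c,Y^c]_s=\|Q^*\nabla g(X_s)\|_{\HH}^2 ds$, and the jump of $Y$ at time $s$ (if induced by a jump of $N$ with mark $u$) equals $\Phi(X_{s-},u)$. Therefore
\begin{align*}
e^{Y_t}-1
=&\int_0^t e^{Y_{s-}}dY_s+\tfrac12\int_0^t e^{Y_{s-}}\|Q^*\nabla g(X_s)\|_{\HH}^2 ds\\
&+\sum_{s\le t}e^{Y_{s-}}\big(e^{\Delta Y_s}-1-\Delta Y_s\big),
\end{align*}
and the last sum can be rewritten as the $N(ds,du)$-integral of $e^{Y_{s-}}(e^{\Phi}-1-\Phi)$, which, after compensating via $N=\widetilde N+n(du)ds$, splits into a $\widetilde N$-martingale and a drift contribution $\int_0^t\!\int_{\UU}e^{Y_{s-}}(e^{\Phi}-1-\Phi)n(du)ds$.

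Collecting all finite-variation contributions, the total drift of $e^{Y_t}$ is
\[
\int_0^t e^{Y_{s-}}\Big[\tilde h(X_s)-h(X_s)\Big]ds,
\]
where
\begin{align*}
\tilde h(x)=&\langle \Delta x,\nabla g(x)\rangle+\langle B(x),\nabla g(x)\rangle+\tfrac12\tr(Q^*\Hess g(x)Q)+\tfrac12\|Q^*\nabla g(x)\|_{\HH}^2\\
&+\int_{\UU}\big[\Phi(x,u)-\langle \nabla g(x),f(x,u)\rangle\big]n(du)+\int_{\UU}\big[e^{\Phi(x,u)}-1-\Phi(x,u)\big]n(du).
\end{align*}
Since the $\Phi$ terms in the two $n(du)$ integrals cancel, $\tilde h$ coincides exactly with the $h$ defined in the statement, so the drift vanishes identically. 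What remains are the stochastic integrals
\[
\int_0^t e^{Y_{s-}}\langle \nabla g(X_s),QdW_s\rangle+\int_0^t\!\!\int_{\UU}e^{Y_{s-}}\big(e^{\Phi(X_{s-},u)}-1\big)\widetilde N(ds,du),
\]
both of which are $\mathcal F_t$-local martingales (localize by the stopping times $\sigma_n:=\inf\{t\ge 0:|Y_t|\ge n\}\wedge n$, along which the integrands are bounded and the integrals have finite quadratic variation). This proves that $M_t^g=e^{Y_t}$ is an $\mathcal F_t$-local martingale.

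The one genuine subtlety is that $X$ is only a mild solution, so $\Delta X_t$ need not make pointwise sense in $\HH$; accordingly, the It\^o formula above is to be interpreted via the standard Galerkin (or Yosida) approximation argument. The finite-dimensional projections $X^{(n)}$ satisfy an SDE in $P_n\HH$ to which the classical It\^o formula for jump diffusions applies directly, the cancellation of drift holds at each level, and then one passes to the limit using the $\HH$-convergence of $X^{(n)}\to X$, the uniform $L^2$ bounds from Theorem \ref{Thm solu}, dominated convergence against $n(du)ds$, and the boundedness of $g,\nabla g,\Hess g$. This passage is essentially the same argument as in \cite[Lemma 4.1]{RZ} and is the most technical, though routine, step of the proof.
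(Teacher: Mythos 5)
Your proposal is correct and follows essentially the same route as the paper, whose proof is just the two-line remark that one applies It\^o's formula twice in the manner of \cite[Lemma 4.1]{RZ}; your drift cancellation $\tilde h=h$ and the identification of the quadratic variation and jump terms are exactly the computation that reference (and the paper) has in mind. The only cosmetic difference is that you apply It\^o first to $g(X_t)$ and then exponentiate, while the paper phrases it as applying It\^o directly to $\exp(g(X_t))$ and then to $M_t^g$ --- the resulting identities are the same, and your added discussion of the Galerkin approximation and localization fills in details the paper leaves implicit.
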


\begin{proof} We follow the argument in \cite[Lemma 4.1]{RZ}.  Applying It\^o's formula first to $\exp(g(X_t))$ and then to $\exp\left(g(X_t)-g(X_0) -\int_0^t h(X_s)ds\right)$ proves the lemma.
\end{proof}

\begin{proposition}\label{Prop: exp} Assume that (H.1)-(H.4) and (H.6) hold. For any $\lambda \in(0,a_0], \theta\in(0,1)$, there exist   constants $c_1(\theta), c_2(\theta), c_3(\lambda)$ such that for any $T>0$,
\begin{align}\label{eq: exp V}
\mathbb E_x\left[\exp\left(\theta\lambda\int_0^T\|X_t\|_{\VV}dt\right)\right]\le  c_1(\theta)+c_2(\theta)e^{c_3(\lambda)T}e^{\lambda   \|x\|_{\HH}}.
\end{align}
\end{proposition}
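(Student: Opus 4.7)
The plan is to apply the exponential-martingale construction of Lemma \ref{lem exp:mart} with the Lyapunov test function $g(x) := \theta\lambda\,\psi(x)$, where $\psi(x) = (1+\|x\|_{\HH}^2)^{1/2}$ is the smooth Lyapunov function already used in the proof of Theorem \ref{thm main 1}. Although $\psi$ is unbounded, its gradient and Hessian are bounded by \eqref{eq: nabla hess psi}, so the same It\^o-based computation as in Lemma \ref{lem exp:mart} (justified by a cutoff of $\psi$ together with the standard localization described below) produces a local martingale
\[
M^g_t \;=\; \exp\!\Bigl(\theta\lambda\psi(X_t)-\theta\lambda\psi(x)-\int_0^t h(X_s)\,ds\Bigr)
\]
with $h$ as in that lemma.

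The heart of the argument is the one-sided estimate $h(x)\le -\theta\lambda\|x\|_{\VV}+c_3(\lambda)$, with $c_3(\lambda)$ depending only on $\lambda$, $\|Q\|_{\HS}$, $\pi^2$, and the constant from (H.6). Its proof assembles the four contributions to $h$: (i) the deterministic-drift term equals $-\theta\lambda\|x\|_{\VV}^2/\psi(x)$ and $\langle B(x),\nabla\psi(x)\rangle=0$, both computed in the proof of Theorem \ref{thm main 1}; (ii) the Wiener-noise part $\tfrac{(\theta\lambda)^2}{2}\|Q^*\nabla\psi(x)\|_{\HH}^2 + \tfrac{\theta\lambda}{2}\tr(Q^*\Hess\psi(x)Q)$ is bounded by $\tfrac{\lambda(1+\lambda)}{2}\|Q\|_{\HS}^2$ using \eqref{eq: nabla hess psi}; (iii) for the compensated jump integrand, Taylor's inequality gives $|\psi(x+y)-\psi(x)|\le\|y\|_{\HH}$ and $|\psi(x+y)-\psi(x)-\langle\nabla\psi(x),y\rangle|\le\tfrac12\|y\|_{\HH}^2$, while the elementary estimate $|e^s-1-s|\le\tfrac12 s^2 e^{|s|}$ yields
\begin{align*}
&\int_{\UU}\Bigl|e^{\theta\lambda(\psi(x+f(x,u))-\psi(x))}-1-\theta\lambda\langle\nabla\psi(x),f(x,u)\rangle\Bigr|\,n(du) \\
&\qquad \le\;\tfrac{(\theta\lambda)^2}{2}\int_{\UU}\|f(x,u)\|_{\HH}^2\,e^{\theta\lambda\|f(x,u)\|_{\HH}}\,n(du) + \tfrac{\theta\lambda}{2}\int_{\UU}\|f(x,u)\|_{\HH}^2\,n(du),
\end{align*}
which is uniformly bounded in $x$ by (H.6), since $\theta\lambda\le\lambda\le a_0$; and (iv) Poincar\'e's inequality $\pi\|x\|_{\HH}\le\|x\|_{\VV}$ gives $\psi(x)\le 1+\|x\|_{\VV}/\pi$, whence
\[
\frac{\|x\|_{\VV}^2}{\psi(x)} \;\ge\; \frac{\pi\|x\|_{\VV}^2}{\pi+\|x\|_{\VV}} \;\ge\; \pi\|x\|_{\VV}-\pi^2 \;\ge\; \|x\|_{\VV}-\pi^2,
\]
so $-\theta\lambda\|x\|_{\VV}^2/\psi(x)\le -\theta\lambda\|x\|_{\VV}+\pi^2\lambda$. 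Summing (i)--(iv) produces the desired bound on $h$.

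To extract the exponential estimate I would localize with $\rho_n := \inf\{t\ge 0 : \|X_t\|_{\HH}\ge n \text{ or } \int_0^t\|X_s\|_{\VV}^2\,ds \ge n\}$, which tends to $+\infty$ a.s.\ by Theorem \ref{Thm solu}(i). On $[0,T\wedge\rho_n]$ the local martingale $M^g$ is bounded, hence a true martingale with $\mathbb E_x[M^g_{T\wedge\rho_n}]=1$. Rearranging the defining identity for $M^g$ and using the bound $h\le -\theta\lambda\|\cdot\|_{\VV}+c_3(\lambda)$ together with $\theta\lambda\psi\ge 0$ yields
\[
\exp\!\Bigl(\theta\lambda\int_0^{T\wedge\rho_n}\|X_s\|_{\VV}\,ds\Bigr) \;\le\; M^g_{T\wedge\rho_n}\cdot\exp\!\bigl(\theta\lambda\psi(x)+c_3(\lambda)T\bigr).
\]
Taking $\mathbb E_x$ and sending $n\to\infty$ by monotone convergence, and using $\theta\lambda\psi(x)\le\lambda(1+\|x\|_{\HH})$, delivers an upper bound of the required form with $c_1(\theta)=0$ and $c_2(\theta)=e^{\lambda}$.

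The chief technical obstacles are the rigorous application of It\^o's formula to the unbounded test function $\psi$ (to be handled via a smooth cutoff of $\psi$ combined with the $\rho_n$-localization above) and the uniform control of the compensated jump term; the latter is precisely what forces the exponential-moment assumption (H.6) instead of the weaker square-integrability (H.5).
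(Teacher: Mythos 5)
Your argument is correct, but the way you extract the moment bound is genuinely different from the paper's. The paper takes the test function $\psi_\lambda(x)=(1+\lambda^2\|x\|_{\HH}^2)^{1/2}$ (without the prefactor $\theta\lambda$), so that the drift part of $h$ exactly cancels $Z_\lambda:=\int_0^T\lambda^2\|X_t\|_{\VV}^2\,(1+\lambda^2\|X_t\|_{\HH}^2)^{-1/2}dt$; from $\mathbb E_x[M_T^{\psi_\lambda}]\le1$ it deduces the tail bound $\mathbb P_x(Z_\lambda>r)\le e^{-r}\exp(\psi_\lambda(x)+cT)$ by Chebyshev, and then computes $\mathbb E_x[e^{\theta Z_\lambda}]$ by integrating the tail --- the only place where $\theta<1$ enters, namely to make $\int_0^\infty e^{(\theta-1)r}dr$ finite. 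You instead absorb $\theta\lambda$ into the test function, prove the pointwise bound $h\le-\theta\lambda\|\cdot\|_{\VV}+c_3(\lambda)$, and dominate $\exp\bigl(\theta\lambda\int_0^{T\wedge\rho_n}\|X_s\|_{\VV}ds\bigr)$ pathwise by $M^g_{T\wedge\rho_n}e^{g(x)+c_3(\lambda)T}$ before taking expectations; your Poincar\'e step (iv) plays the role of the paper's reduction of $\lambda\|x\|_{\VV}$ to the integrand of $Z_\lambda$. Your route avoids the tail integration entirely and in fact uses only $\theta\lambda\le a_0$ rather than $\theta<1$, so it yields a marginally stronger conclusion; the individual estimates (i)--(iv) all check out. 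One correction: under (H.6) the jumps need not be bounded, so $M^g_{\cdot\wedge\rho_n}$ is not bounded on $[0,T\wedge\rho_n]$ and you cannot assert $\mathbb E_x[M^g_{T\wedge\rho_n}]=1$; but you only need $\mathbb E_x[M^g_{T\wedge\rho_n}]\le1$, which holds because a nonnegative local martingale started at $1$ is a supermartingale --- the same fact the paper invokes for $M_T^{\psi_\lambda}$. The issue of applying Lemma \ref{lem exp:mart} to an unbounded $g$ is real, but it is equally present (and equally glossed over) in the paper's application of that lemma to $\psi_\lambda$; your cutoff-plus-localization remark is the appropriate fix.
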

\begin{remark}

 For the stochastic Burgers equation driven by Brownian motion (i.e., $f\equiv0$ in \eqref{eq:Burgers}),  the Lemma 5.2 of \cite{Gou1} tells us that for any $\lambda\in(0,\pi^2\|Q\|/2]$, where $\|Q\|$ is the norm of $Q$ as an operator in $\mathbb H$,
 \begin{align}\label{eq: exp V2}
\mathbb E_x\left[\exp\left(\lambda\int_0^T\|X_t\|_{\VV}^2dt\right)\right]\le  e^{\lambda {\mathrm tr}(Q) T} e^{\lambda   \|x\|_{\HH}^2}.
\end{align}
This  is  difficult to prove  in the jump case. Here, we replace  $\int_0^T\|X_t\|_{\VV}^2dt$  by $\int_0^T\|X_t\|_{\VV}dt$ and add an extra parameter $\theta\in(0,1)$ in \eqref{eq: exp V}, which is also enough  to show the exponential decay of the tails of the stopping times $\tau_{\mathcal K}$ and $\tau_{\mathcal K}^{(1)}$ for a suitable choice of $\mathcal K$. 
\end{remark}

\begin{proof}[The proof of Proposition \ref{Prop: exp}]
  For any $\lambda \in (0, a_0]$,
let $$Z_{\lambda}:=\int_0^T\frac{\lambda^2\|X_t\|_{\VV}^2}{\left(1+\lambda^2 \|X_t\|_{\HH}^2\right)^{\frac12}}dt.$$
Since $\|x\|_{\VV}\ge  \pi  \|x\|_{\HH}$, we have
\begin{align}\label{eq 4.9}
\frac{\lambda^2\|X_t\|_{\VV}^2}{\left(1+\lambda^2 \|X_t\|_{\HH}^2\right)^{\frac12}}
\ge   \left(1+\lambda^2 \|X_t\|_{\VV}^2\right)^{\frac12}-1.
\end{align}
Thus, to prove \eqref{eq: exp V}, it is enough to prove that for any $\lambda\in(0,a_0]$,  there exist   constants $c_1(\theta),  c_2(\theta), c_3(\lambda)$ such that
\begin{align}\label{eq: exp V1}
\mathbb E_x\left[\exp\left( \theta Z_{\lambda}   \right)\right]\le c_1(\theta) +c_2(\theta)e^{c_3(\lambda) T}e^{\lambda \|x\|_{\HH}}.
\end{align}
 
Let $$\psi_{\lambda}(x):=(1+\lambda^2\|x\|_{\HH}^2)^{\frac12},$$ a  generalization of  $\psi$ given in \eqref{eq: psi}. Then $\psi_{\lambda}$ has the similar estimates \eqref{eq: nabla psi}-\eqref{eq: B psi} with $\psi$ up to some constants.

Let  $G(x):=e^{\psi_{\lambda}(x)}$.
Note that
$$
{\Hess} G(x)=G(x){\Hess} \psi_{\lambda}(x)+ G(x)\nabla \psi_{\lambda}(x)\times \psi_{\lambda} (x).
$$
It implies that
\begin{equation}
\|\Hess G(x)\|_{\HS}\le \lambda^2 G(x), \ \ \forall x\in \VV.
\end{equation}
 By Taylor's expansion, there exist  constants $\theta_1\in(0,1), \theta_2\in(0,\theta_1)$ satisfying that
\begin{align}\label{eq: exp1}
&\left|\exp\big[ \psi_{\lambda}(x+f(x,u))- \psi_{\lambda}(x) \big]-1-\big\langle \nabla  \psi_{\lambda}(x), f(x,u)\big\rangle\right| \notag \\
=&\left| e^{- \psi_{\lambda}(x)}\big[ G(x+f(x,u))-G(x) -\langle \nabla G(x), f(x,u)\rangle\big]\right| \notag \\
=&\left|\frac{1}{2} e^{- \psi_{\lambda}(x)} \big\langle {\Hess}(G)(x+\theta_1 f(x,u))f(x,u), f(x,u)\big\rangle\right|\notag\\
\le& \frac{\lambda^2}{2}\exp\big( \psi_{\lambda}(x+\theta_1 f(x,u))- \psi_{\lambda}(x) \big)\|f(x,u)\|_{\HH}^2\notag\\
=& \frac{\lambda^2}{2}  \exp\big(\langle \nabla  \psi_{\lambda}(x+\theta_2 f(x,u)), \theta_1 f(x,u)\rangle \big)\|f(x,u)\|_{\HH}^2 \notag \\
\le & \frac{\lambda^2}{2} \exp\big(\lambda  \|f(x,u)\|_{\HH}\big)\|f(x,u)\|_{\HH}^2.
\end{align}
Applying Lemma \ref{lem exp:mart} with the above choice of $ \psi_{\lambda}$, we know that $$M^{ \psi_{\lambda}}(t):=\exp\left( \psi_{\lambda}(X_t)- \psi_{\lambda}(x)-\int_0^t h(X_s)ds\right)$$ is an $\mathcal F_t$-local martingale, where
\begin{align}\label{eq h1}
h(x):=&\langle  \Delta x, \nabla  \psi_{\lambda}(x)\rangle+\langle  B(x), \nabla  \psi_{\lambda}(x)\rangle+\frac12\left\|Q^*\nabla  \psi_{\lambda}(x)\right\|_{\HH}^2+\frac12\tr(Q^*{\Hess  \psi_{\lambda}}(x)Q) \notag \\
&+\int_{\UU}\left(\exp\left[ \psi_{\lambda}(x+f(x,u))- \psi_{\lambda}(x)\right]-1-\langle \nabla  \psi_{\lambda}(x), f(x,u) \rangle  \right)n(du) \notag \\
\le & -\frac{\lambda^2\|x\|_{\VV}^2}{(1+\lambda^2\|x\|_{\HH}^2)^{ \frac12}}+\lambda^2\|Q\|_{\HS}^2 +\frac{\lambda^2}{2}\int_{\UU}\exp\left(\lambda \|f(x,u)\|_{\HH} \right)\|f(x,u)\|_{\HH}^2 n(du).
\end{align}
By  (H.6), we know that for any fixed $\lambda\in(0, a_0]$,
\begin{equation}\label{eq h2}
M_{\lambda}:= \sup_{x\in \HH}\int_{\UU} \|f(x,u)\|_{\HH}^2 \exp\left(\lambda \|f(x,u)\|_{\HH} \right) n(du)<\infty.
\end{equation}
Hence, by \eqref{eq h1} and \eqref{eq h2},  we have that for any $r\ge0$,
\begin{align}\label{eq:Z}
&\mathbb P\left(Z_{\lambda}>r\right) \notag \\ 
\le &\mathbb P\left( \psi_{\lambda}(X_T)+  \int_0^T\frac{\lambda^2 \|X_s\|_{\VV}^2}{\left(1+\lambda^2\|X_s\|_{\HH}^2\right)^{\frac12}}ds>  r\right) \notag \\
= &\mathbb P\Bigg( \psi_{\lambda}(X_T)- \psi_{\lambda}(x)-\int_0^T h(X_s)ds+ \psi_{\lambda}(x)+\int_0^Th(X_s)ds \notag\\
 &\ \ \ \ + \int_0^T\frac{\lambda^2\|X_s\|_{\VV}^2}{\left(1+\lambda^2\|X_s\|_{\HH}^2\right)^{\frac12}}ds>   r\Bigg) \notag \\
\le  &\mathbb P\left( \psi_{\lambda}(X_T)- \psi_{\lambda}(x)-\int_0^T h(X_s)ds>   r - \psi_{\lambda}(x)-T\lambda^2\left(\frac{M_{\lambda}}{2}+ \|Q\|_{\HS}^2\right )\right) \notag \\
\le & \mathbb E\left[M^{ \psi_{\lambda}}_T \right] \exp\left(-r + \psi_{\lambda}(x)+T\lambda^2\left(\frac{M_{\lambda}}{2}+\|Q\|_{\HS}^2\right) \right) \notag \\
\le &  \exp\left(-r + \psi_{\lambda}(x)+T\lambda^2\left(\frac{M_{\lambda}}{2}+\|Q\|_{\HS}^2\right) \right),
\end{align}
where in the last  inequality we have used the fact that  $M_t^g$ is a non-negative local martingale.

For any $\theta\in(0,1)$ and $\lambda\le a_0$, by \eqref{eq:Z},  we have
\begin{align}
&\mathbb E\left[\exp \left( \theta Z_{\lambda} \right)\right] \notag \\
=& \theta+\theta\int_0^{\infty}   e^{ \theta r }   \mathbb P_x\left( Z_{\lambda}>r \right) dr \notag \\
\le & \theta+\theta\int_0^{\infty} e^{ \theta r }   \exp\left(-r + \psi_{\lambda}(x)+T\lambda^2\left(\frac{M_{\lambda}}{2}+\|Q\|_{\HS}^2\right)\right)dr \notag \\ 
=& \theta+ \exp\left( \psi_{\lambda}(x)+T\lambda^2\left(\frac{M_{\lambda}}{2}+\|Q\|_{\HS}^2\right)\right)  \frac{\theta}{1-\theta}.
\end{align}
This  implies \eqref{eq: exp V}.
The proof is complete.
\end{proof}

\noindent{\bf Acknowledgments}:  We sincerely thank the referee for helpful comments and remarks. S. Hu is supported by   the National Social Science (17BTJ034); R. Wang is supported by  the  NSFC(11871382), the Chinese State Scholarship Fund Award by the CSC   and the  Youth Talent Training Program by Wuhan University.

\end{document}